\numberwithin{equation}{section}
\theoremstyle{plain}
\newtheorem{theorem}{Theorem}[section]
\newtheorem{corollary}[theorem]{Corollary}
\newtheorem{lemma}[theorem]{Lemma}
\newtheorem{proposition}[theorem]{Proposition}
\newtheorem{problem}[theorem]{Problem}
\theoremstyle{definition}
\newtheorem{definition}[theorem]{Definition}
\newtheorem{example}[theorem]{Example}
\theoremstyle{remark}
\newtheorem{remark}[theorem]{Remark}
\newcommand{\A}{\mathcal{A}}
\newcommand{\Z}{\mathbb{Z}}
\newcommand{\R}{\mathbb{R}}
\newcommand{\C}{\mathbb{C}}
\newcommand{\CP}{\mathbb{CP}}
\newcommand{\RP}{\mathbb{RP}}
\newcommand{\scS}{\mathcal{S}}
\newcommand{\scL}{\mathcal{L}}
\newcommand{\rank}{\operatorname{rank}}
\newcommand{\codim}{\operatorname{codim}}
\newcommand{\Hom}{\operatorname{Hom}}
\newcommand{\Ker}{\operatorname{Ker}}
\renewcommand{\Im}{\operatorname{Im}}
\newcommand{\ID}{\mathcal{A_{ID}}}
\newcommand{\decID}{d\mathcal{A_{ID}}}
\begin{document}

\title[Double coverings]{Double coverings of arrangement complements and 
$2$-torsion in Milnor fiber homology}
%\title{Transfer map, Aomoto complex and $2$-torsions in the 
%homology of Milnor fibers}

\begin{abstract}
We prove that the mod $2$ Betti numbers of double coverings of 
a complex hyperplane arrangement complement are combinatorially 
determined. The proof is based on a relation between the mod $2$ Aomoto complex 
and the transfer long exact sequence. 

Applying the above result to the icosidodecahedral arrangement ($16$ 
planes in the three dimensional space related to the icosidodecahedron), 
we conclude that the first homology of the Milnor fiber 
has non-trivial $2$-torsion. 
\end{abstract}

\author{Masahiko Yoshinaga}
\address{Masahiko Yoshinaga, 
Department of Mathematics, Faculty of Science, Hokkaido University, 
Kita 10, Nishi 8, Kita-Ku, Sapporo 060-0810, Japan.}
\email{yoshinaga@math.sci.hokudai.ac.jp}

\thanks{The author thanks Professor Toshiyuki Akita for helpful discussion on mod $2$ Betti numbers. The author also deeply thanks the referee for careful reading of the 
manuscript and for many suggestions. 
This work was partially supported by 
JSPS KAKENHI Grant Numbers JP18H01115, JP15KK0144}

\dedicatory{Dedicated to the memory of Stefan Papadima}

\subjclass[2010]{Primary 52C35, Secondary 20F55}
%Primary 14C21,
%14F99, 32S22 ; Secondary 14E05, 14H50.}
\keywords{Hyperplane arrangements, Milnor fiber, monodromy, 
Aomoto complex, transfer map, Icosidodecahedron}

\date{\today}
\maketitle

\tableofcontents

%\maketitle

\section{Introduction}
\label{sec:intro}

An arrangement of hyperplanes is a finite collection of 
hyperplanes in a finite dimensional vector (affine, or projective) 
space. For a complex arrangement, we can associate 
several topological spaces: the complement, Milnor fiber, 
their covering spaces, boundary manifolds and so on. 
These provides important spaces in topology 
such as the classifying space of the pure braid (Artin) group 
and their subgroups. 

Another important aspect of arrangement is the combinatorial 
structure. An arrangement defines the poset of subspaces 
expressed as intersections of hyperplanes. These subspaces 
can be also considered as (the closure of) strata of a stratification 
of the total space. 
As is naturally expected, the poset of intersections has 
a lot of information on the associated topological spaces. 
Indeed, some of topological invariant (e.g., cohomology ring 
of the complement \cite{O-S}. See also \S \ref{sec:general} 
for more details) is determined by the poset of intersections, 
while some other can not be determined by the poset 
(e.g., the fundamental group \cite{ryb}). 
Furthermore, there are many invariant whose relations to 
the poset of intersections are yet unclear. 

Recently, Milnor fibers, and more generally, covering spaces 
of the arrangement complements received considerable amount 
of attention. 
See for \cite{suc-mil} for survey on the topic. 
Among other results, we just recall some of recent results on 
the Milnor fiber of an arrangement: 
\begin{itemize}
\item 
Explicit computations for interesting examples 
\cite{bdy, dim-sti, mac-pap, y-mil}. 
\item 
Upper/lower bounds of monodromy eigenspaces 
\cite{bai-set, bds, ps-sp, williams, y-mil}. 
\item 
Vanishing of non-trivial monodromy eigenspaces 
\cite{deg, salser-tw, salser-arr}. 
\item 
Examples of arrangements with multiplicities whose first homology group 
of the Milnor fiber has torsion \cite{cds}. 
\item 
Examples of arrangements whose higher degree homology of the Milnor fiber has 
torsion \cite{ds-multinets}. 
\item 
Purely combinatorial description of monodromy eigenspaces 
for line arrangements which have only double and triple intersections. 
\cite{dim-tri, lib, PS-modular}. 
\end{itemize}

The purpose of this paper is to study the mod $2$ homology and 
$2$-torsion in the integral homology of covering spaces of arrangement 
complements. The main results of this paper are as follows. 
The first result is concerning mod $2$ homology of the double 
covering. 
\begin{theorem}
(See Theorem \ref{thm:2cover} for more precise statement.) 
The mod $2$ Betti numbers of double covers of arrangement 
complements are combinatorially determined. 
\end{theorem}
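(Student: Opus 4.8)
The plan is to express each mod $2$ Betti number of a double cover $p\colon\widetilde M\to M$ directly in terms of the mod $2$ Orlik--Solomon algebra $A^\bullet=H^\bullet(M;\Z/2)$, whose ring structure is combinatorial by \cite{O-S}. A connected double cover is classified by a nonzero class $\alpha\in H^1(M;\Z/2)$; since $H^1(M;\Z/2)\cong(\Z/2)^{|\A|}$ is spanned by the meridian duals of the hyperplanes, the datum of $\alpha$ is itself combinatorial. It therefore suffices to show that $\dim_{\Z/2}H^n(\widetilde M;\Z/2)$ depends only on $A^\bullet$ and $\alpha$.

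First I would set up the transfer (Smith--Gysin) long exact sequence. Writing $p_*\,\Z/2$ for the pushforward of the constant sheaf along $p$, there is a short exact sequence of sheaves $0\to\Z/2\to p_*\,\Z/2\to\Z/2\to 0$ on $M$ whose extension class is $\alpha$. Combined with the identification $H^n(M;p_*\,\Z/2)\cong H^n(\widetilde M;\Z/2)$ (Shapiro/Leray), its long exact cohomology sequence reads
\[
\cdots\xrightarrow{\ \cup\alpha\ }H^n(M)\xrightarrow{\ \iota\ }H^n(\widetilde M)\xrightarrow{\ q\ }H^n(M)\xrightarrow{\ \cup\alpha\ }H^{n+1}(M)\to\cdots,
\]
the crucial point being that the connecting homomorphism is cup product with $\alpha$.

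Next I would identify this cup product with the differential of the \emph{mod $2$ Aomoto complex}. Under the Orlik--Solomon isomorphism, cup product with $\alpha\in H^1(M;\Z/2)$ is precisely left multiplication by $\alpha$ in $A^\bullet$, yielding the cochain complex $(A^\bullet,\alpha\cup-)$; over $\Z/2$ all signs vanish, so this is exactly the mod $2$ Aomoto complex at $\alpha$. Writing $h^n=\dim_{\Z/2}H^n(A^\bullet,\alpha\cup-)$ and $r_n$ for the rank of $\alpha\cup-\colon A^n\to A^{n+1}$, a rank count in the long exact sequence gives $\dim_{\Z/2}H^n(\widetilde M;\Z/2)=(b_n(M)-r_{n-1})+(b_n(M)-r_n)=b_n(M)+h^n$ (the trivial cover $\alpha=0$ returning $2b_n(M)$ as a sanity check). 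Since both $b_n(M)$ and the entire complex $(A^\bullet,\alpha\cup-)$ are manufactured from the combinatorially determined pair $(A^\bullet,\alpha)$, the right-hand side is combinatorial, which proves the theorem.

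I expect the main obstacle to be establishing cleanly that the connecting map is \emph{exactly} cup product with $\alpha$, together with the correct homological bookkeeping. Over $\C$ one diagonalizes the deck action and reads off eigenspaces, but over $\Z/2$ the group ring $\Z/2[\Z/2]\cong\Z/2[t]/(t-1)^2$ is non-semisimple and no eigenspace decomposition survives; the transfer sequence is the substitute. Verifying that the extension class of $p_*\,\Z/2$ is genuinely $\alpha$, rather than, say, a Bockstein twist of it, is the delicate step on which the whole combinatorial conclusion depends.
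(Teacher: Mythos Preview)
Your proposal is correct and follows essentially the same route as the paper: both arguments use the transfer long exact sequence for the double cover with connecting map $\cup\,\alpha$, identify this map with the Aomoto differential on the mod $2$ Orlik--Solomon algebra, and read off $\overline{b_n}(\widetilde M)=b_n(M)+h^n$ by a rank count. The only cosmetic difference is that you reconstruct the transfer sequence from the short exact sequence $0\to\Z/2\to p_*\Z/2\to\Z/2\to 0$ of sheaves, whereas the paper simply quotes the transfer long exact sequence with its $w\cup$ connecting map as a known fact from \cite{hau}; your worry about verifying the connecting map is exactly $\cup\,\alpha$ is therefore addressed by that reference.
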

More precisely, we will obtain a combinatorial expression of the 
rank of mod $2$ homology in terms of the mod $2$ Aomoto complex. 

The second result is about the Milnor fiber of the icosidodecahedral 
arrangement which is the arrangement of $16$ planes 
in $\R^3$ associated to the icosidodecahedron. 
(See \S \ref{sec:icosido} for definition). 
\begin{theorem}
(See Theorem \ref{milnortorsion} (3).) 
Let $F_{\ID}$ be the Milnor fiber of the icosidodecahedral arrangement 
$\ID$. 
Then $H_1(F_{\ID}, \Z)$ has $2$-torsion. 
\end{theorem}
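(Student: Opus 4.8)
The plan is to leverage the first main theorem: since the mod $2$ Betti numbers of double covers are combinatorially determined via the mod $2$ Aomoto complex, I would first identify a suitable double covering of the complement $M(\ID)$ of the icosidodecahedral arrangement that is dominated by (or sits inside) the Milnor fiber $F_{\ID}$. The Milnor fiber carries a cyclic covering structure of degree $16$ (the number of planes), and there is a natural intermediate double covering $\widetilde{M}\to M(\ID)$ corresponding to the index-$2$ subgroup of $\Z/16\Z$, equivalently the one classified by the mod $2$ reduction of the total linking-number homomorphism $\pi_1(M(\ID))\to\Z$ sending each meridian to $1$. The strategy is then to compare integral and mod $2$ homology of this double cover to detect $2$-torsion in $H_1(F_{\ID},\Z)$.

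The key steps, in order, would be as follows. First, I would compute the ordinary first Betti number $b_1(\widetilde{M};\Q)$ of the double cover, either directly from the Milnor fiber monodromy eigenspace data or from the rational Aomoto complex, which is combinatorial. Second, using the main theorem I would compute the mod $2$ first Betti number $\dim_{\mathbb{F}_2} H_1(\widetilde{M};\mathbb{F}_2)$ combinatorially from the mod $2$ Aomoto complex of $\ID$. The crucial point is that if these two numbers disagree, then by the universal coefficient theorem the integral homology $H_1(\widetilde{M};\Z)$ must contain $2$-torsion, since the difference $\dim_{\mathbb{F}_2} H_1(\widetilde{M};\mathbb{F}_2)-b_1(\widetilde{M};\Q)$ counts exactly the rank of the $2$-torsion contribution from $H_0$ and $H_1$. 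Third, I would transfer this $2$-torsion from the intermediate double cover up to the Milnor fiber $F_{\ID}$ itself, using the covering map $F_{\ID}\to\widetilde{M}$ (or the relation between their homology coming from the tower of cyclic covers) together with a transfer argument to show the torsion survives.

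I expect the main obstacle to be the last step: propagating the detected $2$-torsion from the double cover to the full Milnor fiber. Torsion can in principle be killed or created when passing between covers, so I would need a careful argument — most naturally via the transfer long exact sequence that underlies the first main theorem — to guarantee that the $2$-torsion class in $H_1(\widetilde{M};\Z)$ pulls back to a nonzero $2$-torsion class in $H_1(F_{\ID};\Z)$. A secondary, purely computational obstacle is organizing the mod $2$ Aomoto complex of the $16$-plane arrangement $\ID$ explicitly enough to carry out the rank computation; this requires a concrete description of the intersection lattice of the icosidodecahedral arrangement and its multiple points, which I would extract from the geometry of the icosidodecahedron in \S\ref{sec:icosido}. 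If the direct comparison of Betti numbers does not immediately exhibit the discrepancy, a fallback is to compute the relevant boundary maps of the integral Aomoto-type complex modulo $4$ to pin down the torsion order, but I anticipate the mod $2$ Betti number jump to be the decisive and cleanest mechanism.
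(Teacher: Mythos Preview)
Your overall strategy aligns with the paper's: exploit Theorem~\ref{thm:2cover} together with the mod~$2$ Aomoto computation (Proposition~\ref{mod2aomoto}) to force a discrepancy between integral and mod~$2$ first Betti numbers, and conclude via the universal coefficient theorem. The divergence lies in \emph{where} you apply that comparison. You propose to detect $2$-torsion in the intermediate double cover $\widetilde{M}$ and then propagate it up to $F_{\ID}$, correctly flagging this propagation as the main obstacle. The paper sidesteps that obstacle entirely: rather than pushing torsion classes through the tower of covers (which, as you note, is delicate), it pushes the mod~$2$ Betti number \emph{inequality} through the tower via Corollary~\ref{cor:increase}, which rests on the $\Z_4$-liftability Lemma~\ref{lem:liftable}. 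Since the $\Z_{16}$-cover $F_{\ID}\to M(\decID)$ is induced from a surjection $\pi_1(M(\decID))\to\Z$, every double cover in the tower is $\Z_4$-liftable, giving $\overline{b_1}(F_{\ID})\geq\overline{b_1}(\widetilde{M})=b_1(M(\decID))+1=16$. The universal coefficient theorem is then applied directly at $F_{\ID}$, comparing this with $b_1(F_{\ID})=15$; the latter requires showing that \emph{all} nontrivial monodromy eigenspaces vanish, not just the $(-1)$-eigenspace you would need for $b_1(\widetilde{M})$.

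So your plan economizes on the eigenspace computation but leaves step~5 open; the paper trades the other way, and the monotonicity of $\overline{b_1}$ along $\Z_4$-liftable towers (Lemma~\ref{lem:liftable}/Corollary~\ref{cor:increase}) is precisely the missing ingredient that makes the argument close. With that lemma in hand there is no need to track a specific torsion class through the covers at all.
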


The organization of this paper is as follows. 
In \S \ref{sec:general}, we review several results concerning 
Milnor fiber of arrangements, intersection poset, Orlik-Solomon algebra, 
and Aomoto complex, 
which are necessary for stating results and proofs.

\S \ref{sec:fincovercomb} summarizes results on abelian covering spaces. 
After recalling the transfer long exact sequence of a double covering in 
\S \ref{sec:finabcov}-\S \ref{sec:transfer}, we prove, in \S \ref{sec:liftable}, 
a key lemma (Lemma \ref{lem:liftable}) 
which guarantees the first mod $2$ Betti number does not decrease 
by taking certain (``$\Z_4$-liftable'') double coverings. 
In \S \ref{sec:comb}, we prove the first main result, that is 
a combinatorial formula for the mod $2$ Betti numbers of double coverings 
of an arrangement complement (Theorem \ref{thm:2cover}). 

In \S \ref{sec:icosido}, we introduce the icosidodecahedral arrangement 
$\ID$, and prove in \S \ref{sec:mod2icosi} and \S \ref{sec:2toricosi} 
that the first homology of the Milnor fiber has $2$-torsion. 

\medskip 

\noindent
{\bf Notation and Convention.} We will use the following convention 
throughout the paper. 
\begin{itemize}
\item 
In this paper, $\Z_n$ denotes the cyclic group 
(also a finite ring) $\Z/n\Z$. 
\item 
For a space $X$ with the homotopy type of a finite CW complex, 
$b_i(X)=\rank_{\Z}H^i(X, \Z)$ denotes the Betti number, and 
$\overline{b_i}(X)=\rank_{\Z_2}H^i(X, \Z_2)$ denotes the mod $2$ Betti number. 
\item 
A \emph{covering} $Y\longrightarrow X$ always means an unbranched covering. 
Unless otherwise stated, we assume $X$ and $Y$ are connected. 
\item 
We will assume that the base point $x_0\in X$ is fixed 
when we consider the fundamental group $\pi_1(X)=\pi_1(X, x_0)$. 
\item 
For an element $\gamma\in\pi_1(X)$, denote by $[\gamma]$ its 
homology class. If $(Y, y_0)\longrightarrow (X, x_0)$ is a covering, 
$\tilde{\gamma}$ denotes the lifting of $\gamma$ 
starting from the the base point $y_0$. Note that 
$\tilde{\gamma}$ is not necessarily a closed curve. 
\end{itemize}

\section{Generalities on hyperplane arrangements}
\label{sec:general}

In this section, we summarize notation and several results 
on hyperplane arrangements. See \cite{ot, ot-int} for more details. 

Let $\A=\{H_1, H_2, \dots, H_n\}$ be a collection of affine hyperplanes 
in $\C^\ell$. We denote by $M(\A)=\C^\ell\smallsetminus\bigcup_{i=1}^nH_i$ 
the complement. 
Let $\overline{H_\infty}$ be a projective hyperplane in 
$\CP^\ell$. 
Using the identification $\CP^\ell=\C^\ell\sqcup \overline{H_\infty}$, 
$\A$ naturally induces an arrangement of $n+1$ projective hyperplanes 
$\overline{\A}=\{\overline{H_1}, \dots, \overline{H_n}, \overline{H_\infty}\}$. 
A projective hyperplane $\overline{H_i}$ defines a linear hyperplane 
$\widetilde{H_i}$ in $\C^{\ell+1}$. The collection of these linear hyperplanes 
$\widetilde{\A}=\{\widetilde{H_1}, \dots, \widetilde{H_n}, \widetilde{H_\infty}\}$ 
is called the coning of $\A$ (which is also denoted by $c\A$). 
We can also define the opposite operation, which is called the deconing 
of $\widetilde{\A}$ with respect to $\widetilde{H_\infty}$ and denoted by 
$\A=d_{\widetilde{H_\infty}}\widetilde{\A}$. 

There is a natural projection $p:M(c\A)\longrightarrow M(\A)$. It is easily seen 
that $M(c\A)\simeq M(\A)\times\C^\times$. 
Let $\alpha_i:\C^{\ell+1}\longrightarrow\C$ be a non-zero 
linear form such that $\widetilde{H_i}=\alpha_i^{-1}(0)$ ($i=1, \dots, n, \infty$). 
Then $Q=\prod_i\alpha_i$ is a homogeneous polynomial of degree $n+1$. 
$F:=\{x\in\C^{\ell+1}\mid Q(x)=1\}\subset\C^{\ell+1}$ is called 
the Milnor fiber of $c\A$. Since $Q$ is homogeneous, the cyclic group 
$\Z/(n+1)\Z\simeq\{\zeta\in\C\mid \zeta^{n+1}=1\}$ acts on $F$ defined 
by the map $\mu:F\longrightarrow F, x\longmapsto \zeta\cdot x$. 
This action is nothing but the monodromy 
action of the fibration $Q:M(c\A)\longrightarrow\C^\times$. 
The monodromy map $\mu:F\longrightarrow F$ induces a linear 
map on homology $\mu_*:H_k(F, \Z)\longrightarrow H_k(F, \Z)$. 
Since the map has finite order, the homology with coefficients in $\C$ 
is decomposed into the direct sum of eigenspaces, 
\[
H_k(F, \C)=\bigoplus_{\zeta^{n+1}=1}H_k(F, \C)_\zeta, 
\]
where $H_k(F, \C)_\zeta$ is the $\zeta$-eigenspace of $\mu_*$. 
Since $M(\A)$ can be identified with the quotient $F/\langle\mu\rangle$, 
we have $H_k(F, \C)_1\simeq H_k(M(\A), \C)$. 

Given an affine arrangement $\A=\{H_1, \dots, H_n\}$, 
non-empty intersections $X=\bigcap_{H\in\mathcal{S}}H$ 
($\mathcal{S}\subset\A$) form a poset with respect to reverse inclusion, 
which is denoted by $L(\A)$ and called the intersection poset. 
We say that $\mathcal{S}\subset\A$ does not intersect if 
$\bigcap_{H\in\mathcal{S}}H =\emptyset$. 
A subset $\mathcal{S}\subset\A$ is called dependent if 
$\bigcap_{H\in\mathcal{S}}H\neq\emptyset$ and 
$\codim\bigcap_{H\in\mathcal{S}}H < \#\mathcal{S}$. 
For $X\in L(\A)$, we denote by $\A_X:=\{H\in\A\mid H\supset X\}$ the 
localization of $\A$ at $X$. 

From the intersection poset, the Orlik-Solomon algebra 
$A_{\Z}^\bullet(\A)$ is defined as follows. 
Let $E=\bigoplus_{i=1}^n\Z e_i$ be 
the free abelian group generated by the symbols $e_i$ corresponding to 
the hyperplanes $H_i$. Let 
$\wedge E=\Z\oplus E\oplus E^{\wedge 2}\oplus\cdots\oplus E^{\wedge n}$ 
be the exterior algebra on $E$. 
For given $\mathcal{S}=\{i_1, i_2, \dots, i_k\}\subset\A$ with 
$i_1<\dots<i_k$, let $e_{\mathcal{S}}:=e_{i_1}\wedge\cdots\wedge e_{i_k}$, 
and define $\partial e_{\mathcal{S}}\in E^{\wedge (k-1)}$ by 
\[
\partial e_{\mathcal{S}}=\sum_{p=1}^k(-1)^{p-1}e_{i_1}\wedge\dots
\widehat{e_{i_p}}\wedge\dots\wedge e_{i_k}. 
\]
Define $I(\A)$ to be the ideal of $\wedge E$ generated by the following 
elements. 
\[
\{\partial e_{\mathcal{S}}\mid \mathcal{S}\mbox{ is dependent}\}
\cup
\{e_{\mathcal{S}}\mid\mathcal{S}\mbox{ does not intersect}\}. 
\]
The quotient ring $A_{\Z}^*(\A):=\wedge E/I(\A)$ is called the 
Orlik-Solomon algebra. For any abelian group $R$, denote 
$A_R^*(\A):=A_{\Z}^*(\A)\otimes_{\Z}R$. Note that when $R$ is a 
commutative ring, $A_R^*(\A)$ becomes a graded commutative $R$-algebra. 
Brieskorn's Lemma \cite[Corollary 3.73]{ot} shows that the degree $k$ 
component of the Orlik-Solomon algebra is 
\[
A_R^k(\A)\simeq\bigoplus_{\substack{X\in L(\A)\\ \codim X=k}}A_R^k(\A_X). 
\]

Orlik and Solomon \cite{O-S} proved that each homology group 
$H_k(M(\A), \Z)$ is torsion free and the cohomology ring is isomorphic 
to the Orlik-Solomon algebra. Namely, for any commutative ring $R$, 
we have an isormorphism 
\[
H^*(M(\A), R)\simeq A_R^*(\A) 
\]
as graded commutative $R$-algebras. 

The following will be used later. 
\begin{lemma}
\label{lem:lines}
Let $\A=\{H_1, \dots, H_n\}$ be an arrangement of $n$ lines ($n\geq 3$) 
in $\C^2$ with unique intersection (Figure \ref{fig:lines}). 
Let $R$ be an integral domain. 
Let $\eta=\sum_{i=1}^n a_ie_i$ and 
$\omega=\sum_{i=1}^n b_ie_i \in A_R^1(\A)$. 
Then the following are equivalent. 
\begin{itemize}
\item[(a)] $\eta\wedge\omega=0$. 
\item[(b)] $\sum_{i=1}^na_i=\sum_{i=1}^nb_i=0$ or 
$\omega, \eta$ are linearly dependent 
(i. e., there are $c_1, c_2\in R$, not both zero, such that 
$c_1\eta+c_2\omega=0$). 
\end{itemize}
\end{lemma}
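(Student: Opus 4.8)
The plan is to compute the product $\eta\wedge\omega$ explicitly in a basis of $A_R^2(\A)$ and simply read off when it vanishes. Since all $n$ lines pass through the single common point, every subset $\mathcal{S}\subseteq\A$ with $\#\mathcal{S}\ge 2$ has $\bigcap\mathcal{S}$ equal to that point, of codimension $2$; hence the minimal dependent sets (circuits) are exactly the triples, and there are no non-intersecting subsets. Consequently $A_R^1(\A)=\bigoplus_{i=1}^n R e_i$ is free of rank $n$ with no relations, while $A_R^2(\A)$ is free on the no-broken-circuit basis $\{e_1\wedge e_k : 2\le k\le n\}$, of rank $n-1$. The only relations in degree $2$ come from $\partial e_{\{1,j,k\}}=0$ for $1<j<k$, which I rewrite as the reduction rule
\[
e_j\wedge e_k = e_1\wedge e_k - e_1\wedge e_j \qquad (2\le j<k\le n).
\]

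Next I expand $\eta\wedge\omega=\sum_{i<j}(a_ib_j-a_jb_i)\,e_i\wedge e_j$ and apply the reduction rule to every term with $i\ge 2$. Collecting the coefficient of a fixed basis vector $e_1\wedge e_k$ yields a sum of the quantities $a_mb_k-a_kb_m$ over all $m\ne k$, which collapses to $s\,b_k-t\,a_k$ once I set $s:=\sum_{i=1}^n a_i$ and $t:=\sum_{i=1}^n b_i$. I therefore expect the clean expression
\[
\eta\wedge\omega=\sum_{k=2}^n (s\,b_k-t\,a_k)\,e_1\wedge e_k .
\]
A useful sanity check on the bookkeeping is that the analogous coefficient $s\,b_1-t\,a_1$ for the missing index $k=1$ is forced to vanish, since $s\,t=s\sum_k b_k=s\,b_1+\sum_{k\ge 2}t\,a_k$ already gives $s\,b_1=t\,a_1$ whenever $s\,b_k=t\,a_k$ for all $k\ge 2$.

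Because $A_R^2(\A)$ is free on the listed basis, $\eta\wedge\omega=0$ is equivalent to $s\,b_k=t\,a_k$ for all $k$, i.e.\ to the single identity $s\,\omega=t\,\eta$ in $A_R^1(\A)$. From here the equivalence with (b) is a short case analysis. If $s=t=0$ the identity holds automatically, which is the first alternative of (b). If $s$ and $t$ are not both zero, then $-t\,\eta+s\,\omega=0$ is a relation with a nonzero coefficient pair $(-t,s)$, so $\eta$ and $\omega$ are linearly dependent. For the converse: $s=t=0$ immediately kills every coefficient $s\,b_k-t\,a_k$; and if $c_1\eta+c_2\omega=0$ with, say, $c_1\ne 0$, then $c_1(\eta\wedge\omega)=(-c_2\omega)\wedge\omega=0$, whence $\eta\wedge\omega=0$ since $R$ is a domain and $A_R^2(\A)$ is torsion-free.

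The one genuine subtlety, and the place where the hypothesis that $R$ is an integral domain is essential, is this last cancellation: over an arbitrary ring one cannot divide out the scalar $c_1$, and the dictionary between $s\,\omega=t\,\eta$ and linear dependence breaks down. Torsion-freeness of the free module $A_R^2(\A)$ is exactly what licenses passing between the vanishing of the product and the existence of the linear relation; everything else is the routine degree-$2$ computation in the Orlik–Solomon algebra of a pencil.
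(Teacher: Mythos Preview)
Your argument is correct. The expansion in the no-broken-circuit basis $\{e_1\wedge e_k:2\le k\le n\}$ is done accurately, the identity $\eta\wedge\omega=\sum_{k=2}^n(s\,b_k-t\,a_k)\,e_1\wedge e_k$ is right, and the case analysis deriving the equivalence with (b) is clean; you correctly isolate the one place where the integral-domain hypothesis is used (cancelling the scalar $c_1$ in the free module $A_R^2(\A)$).

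As for comparison with the paper: the paper does not prove this lemma at all---it simply cites Yuzvinsky's \emph{Cohomology of the Brieskorn--Orlik--Solomon algebras} (Proposition~2.1) and Torielli--Yoshinaga (Lemma~3.1). Your proof is therefore a genuine, self-contained addition rather than a paraphrase: you give an explicit degree-$2$ computation in the Orlik--Solomon algebra of a pencil, whereas the paper defers to the literature. The benefit of your route is that it makes transparent exactly why both alternatives in (b) arise (they correspond to $(s,t)=(0,0)$ versus $(s,t)\neq(0,0)$ in the single equation $s\,\omega=t\,\eta$) and pinpoints where integrality matters; the paper's citation route keeps the exposition short since the lemma is standard.
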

\begin{proof}
See \cite[Proposition 2.1]{yuz-bos} (or \cite[Lemma 3.1]{to-yo}).
\end{proof}

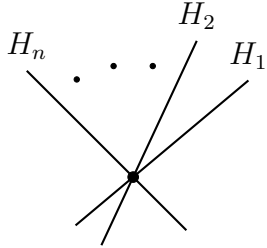
\begin{figure}[htbp]
\centering
\begin{tikzpicture}[scale=1]

\draw[thick] (0,0)++(220:1)--+(40:3) node[above]{$H_1$};
\draw[thick] (0,0)++(245:1)--+(65:3) node[above]{$H_2$};
\draw[thick] (0,0)++(315:1)--+(135:3) node[above]{$H_n$};

\filldraw[fill=black, draw=black] (0,0) circle(2pt);

\filldraw[fill=black, draw=black] (80:1.5) circle(1pt);
\filldraw[fill=black, draw=black] (100:1.5) circle(1pt);
\filldraw[fill=black, draw=black] (120:1.5) circle(1pt);

\end{tikzpicture}
\caption{$n$ lines with one intersection ($n\geq 3$)}
\label{fig:lines}
\end{figure}

For a given $\omega\in A_R^1(\A)$, Orlik-Solomon algebra 
determines the cochain complex 
$(\A_R^\bullet(\A), \omega\wedge-)$, which is called the Aomoto 
complex. This complex plays crucial role in the computation of 
twisted cohomology groups \cite{ao-ki, esv, fal-coh, lib-yuz, ps-sp, 
stv, to-yo, yuz-bos}.

\section{Finite coverings and combinatorial structures}

\label{sec:fincovercomb}

\subsection{Finite abelian covering}

\label{sec:finabcov}

Let $G$ be a finite abelian group. 
Recall that 
a group homomorphism $w:\pi_1(X)\longrightarrow G$ 
determines a finite abelian covering $p(w):X^w\longrightarrow X$. 
Since $G$ is abelian, we have 
\[
\Hom(\pi_1(X), G)=
\Hom(H_1(X, \Z), G)\simeq
H^1(X, G). 
\]
The element $w\in H^1(X, G)$ is called the characteristic class of the covering. 
Note that 
$\Ker(w:\pi_1(X)\to G)$ is isomorphic to $\pi_1(X^w)$ 
and $\pi_1(X)/\pi_1(X^w)\simeq \Im(w:\pi_1(X)\to G)$. 

\subsection{Double covering and transfer exact sequence}
\label{sec:transfer}

(See \cite[\S 4.3]{hau} for details.) 
Now we consider the double covering 
$p:Y\longrightarrow X$ with the unique nontrivial deck transformation 
$\sigma:Y\longrightarrow Y$. Assume that $Y$ is connected. 
Denote the characteristic 
class by $w\in H^1(X, \Z_2)=\Hom(\pi_1(X), \Z_2)$. 
Recall that the transfer map 
\[
\tau_*:H_k(X, \Z_2)\longrightarrow H_k(Y, \Z_2),
\]
is given by $[\gamma]\longmapsto[p^{-1}(\gamma)]$. 
Denote by 
$\tau^*:H^k(Y, \Z_2)\longrightarrow H^k(X, \Z_2)$ the induced 
map between cohomology groups. 

\begin{example}
\label{ex:lift}
Let us closely look at the transfer map $\tau_*$ in degree one. 
Let $\gamma\in\pi_1(X)$. Then either 
$w(\gamma)=0$ or $\neq 0$. 
\begin{itemize}
\item[(1)] 
Suppose $w(\gamma)\neq 0$, equivalently, $\gamma\notin\pi_1(Y)
(\subset\pi_1(X))$. Then the lift $\widetilde{\gamma}$ 
is no longer a cycle. Note that $p^{-1}(\gamma)$ is the union of 
$\widetilde{\gamma}$ and $\sigma(\widetilde{\gamma})$, which 
is the lift of $\gamma^2$. 
Since $\gamma^2\in\pi_1(Y)\subset\pi_1(X)$, 
we have $\tau_*([\gamma])=[\gamma^2]$. 
\item[(2)] 
Suppose $w(\gamma)= 0$. Then the lift $\widetilde{\gamma}$ is 
a cycle on $Y$. Hence 
$\tau_*([\gamma])=[\widetilde{\gamma}]+[\sigma(\widetilde{\gamma})]$. 
\end{itemize}
\end{example}
The transfer map fits into the following long exact sequence. 
\begin{equation}
\cdots
H^{k-1}(X, \Z_2)
\stackrel{w\cup}{\longrightarrow} 
H^{k}(X, \Z_2)
\stackrel{p^*}{\longrightarrow} 
H^k(Y, \Z_2)
\stackrel{\tau^*}{\longrightarrow} 
H^{k}(X, \Z_2)
\stackrel{w\cup}{\longrightarrow} 
H^{k+1}(X, \Z_2)
\cdots. 
\end{equation}

\subsection{Mod $2$ Betti number of $\Z_4$-liftable double coverings}

\label{sec:liftable}

In this section, we prove an inequality between 
the mod $2$ first Betti numbers of a double covering $Y\longrightarrow X$, 
which will play a crucial role later in \S \ref{sec:2toricosi}. 

\begin{lemma}
\label{lem:liftable}
Let $w:\pi_1(X)\longrightarrow\Z_4$ be a surjective homomorphism. 
By composing the canonical surjective homomorphism $\Z_4\rightarrow\Z_2$, 
we obtain an epimorphism $\overline{w}:\pi_1(X)\longrightarrow\Z_2$. 
Consider the associated double coverings 
$X^w\longrightarrow X^{\overline{w}}$ 
and 
$X^{\overline{w}}\longrightarrow X$. 
\begin{enumerate}
\item 
$\overline{w}\in H^1(X, \Z_2)$ satisfies $\overline{w}\cup\overline{w}=0$. 
%Let $u\in\Hom(\pi_1(X^{\overline{w}}), \Z_2)\simeq 
%H^1(X^{\overline{w}}, \Z_2)$ be the characteristic class of the double 
%covering $X^w\longrightarrow X^{\overline{w}}$ and 
%$\tau^*: H^1(X^{\overline{w}}, \Z_2)\longrightarrow H^1(X, \Z_2)$ 
%be the transfer map associated to $\overline{w}$. 
%Then 
%$\tau^*(u)=\overline{w}$. 
\item 
$\overline{b_1}(X^{\overline{w}})\geq\overline{b_1}(X)$. 
%%$\rank_{\Z_2}H^1(X^{\overline{w}}, \Z_2)\geq\rank_{\Z_2}H^1(X, \Z_2)$. 
\end{enumerate}
\end{lemma}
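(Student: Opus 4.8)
The plan is to handle the two parts in sequence, since the cup-square vanishing in (1) is precisely the input that makes the dimension bookkeeping in (2) come out right. For part (1), I would exploit the fact that $\overline{w}$ admits a lift to $\Z_4$, namely $w$ itself. Recall the coefficient short exact sequence $0\to\Z_2\to\Z_4\to\Z_2\to 0$ and its associated Bockstein $\beta=\mathrm{Sq}^1:H^1(X,\Z_2)\to H^2(X,\Z_2)$. The class $\overline{w}\in H^1(X,\Z_2)$ lifts to $H^1(X,\Z_4)$ if and only if $\beta(\overline{w})=0$; since $w$ provides such a lift, $\beta(\overline{w})=0$. On the other hand, the standard identity for the top Steenrod square on a degree-one class gives $\overline{w}\cup\overline{w}=\mathrm{Sq}^1(\overline{w})=\beta(\overline{w})$. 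Combining these yields $\overline{w}\cup\overline{w}=0$.

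For part (2), I would feed the transfer long exact sequence into a dimension count in degree one. Writing $p:X^{\overline{w}}\to X$ for the double covering with characteristic class $\overline{w}$, the relevant segment reads
\[
H^0(X,\Z_2)\xrightarrow{\overline{w}\cup}H^1(X,\Z_2)\xrightarrow{p^*}H^1(X^{\overline{w}},\Z_2)\xrightarrow{\tau^*}H^1(X,\Z_2)\xrightarrow{\overline{w}\cup}H^2(X,\Z_2).
\]
Exactness at the middle term gives $\overline{b_1}(X^{\overline{w}})=\dim_{\Z_2}\Im(p^*)+\dim_{\Z_2}\Im(\tau^*)$. I would compute the two summands separately. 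Since $w$, hence $\overline{w}$, is surjective, $\overline{w}\neq 0$, so the map $\overline{w}\cup:H^0\to H^1$ has one-dimensional image and therefore $\dim_{\Z_2}\Im(p^*)=\overline{b_1}(X)-1$. Exactness at the right-hand copy of $H^1(X)$ identifies $\Im(\tau^*)$ with $\Ker(\overline{w}\cup:H^1(X)\to H^2(X))$.

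The crux is then exactly part (1): because $\overline{w}\cup\overline{w}=0$, the nonzero class $\overline{w}$ lies in this kernel, so $\dim_{\Z_2}\Im(\tau^*)\geq 1$. Assembling the pieces gives $\overline{b_1}(X^{\overline{w}})\geq(\overline{b_1}(X)-1)+1=\overline{b_1}(X)$, as required. I expect the main obstacle to be conceptual rather than computational: one must recognize that the $\Z_4$-liftability hypothesis is what forces the cup-square to vanish, via the Bockstein identity $\mathrm{Sq}^1=\beta$ for the sequence $0\to\Z_2\to\Z_4\to\Z_2\to 0$, and then see that this single fact is precisely what compensates for the dimension drop $\overline{b_1}(X)-1$ coming from the kernel of $p^*$. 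Once these two observations are in place, the remainder is routine exactness bookkeeping.
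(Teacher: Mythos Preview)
Your proposal is correct and follows essentially the same route as the paper: for (1) the paper also uses the Bockstein of $0\to\Z_2\to\Z_4\to\Z_2\to 0$ and the identity $\beta(x)=x\cup x$ on degree-one classes, and for (2) it extracts from the transfer exact sequence the formula $\overline{b_1}(X^{\overline{w}})=\overline{b_1}(X)-1+\rank_{\Z_2}\Im(\tau^*)$ and observes $\overline{w}\in\Ker(\overline{w}\cup)=\Im(\tau^*)$. The only cosmetic difference is that you phrase the Bockstein identity via $\mathrm{Sq}^1$.
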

\begin{proof}
(1) 
Recall that the exact sequence of abelian groups $0\to\Z_2\to\Z_4\to\Z_2\to 0$ 
induces the exact sequence 
\[
H^1(X, \Z_4)\stackrel{\varphi}{\to} 
H^1(X, \Z_2)\stackrel{\beta}{\to} H^2(X, \Z_2). 
\]
The first map $\varphi$ 
sends $w$ to $\overline{w}$. The second map 
$\beta: H^1(X, \Z_2)\to H^2(X, \Z_2)$ is the so-called 
Bockstein homomorphism, which is $\beta(x)=x\cup x$ \cite[Section 4.L]{hatcher}. 
Since $\beta\circ\varphi=0$, we have $\overline{w}\cup\overline{w}=0$. 
%Let $\gamma\in\pi_1(X)$. We shall compute $\tau^*(u)(\gamma)$ to 
%verify $\tau^*(u)(\gamma)=\overline{w}(\gamma)$. 
%Suppose $\gamma\notin\Ker(\overline{w})=\pi_1(X^{\overline{w}})$. 
%Then by Example \ref{ex:lift}, $\tau^*(u)(\gamma)=u(\gamma^2)$. 
%Recall that the map $\pi_1(X)\ni\gamma\longmapsto\gamma^2\in\pi_1(X^{\overline{w}})$ 
%induces the isomorphism of groups 
%\[
%\pi_1(X)/\pi_1(X^{\overline{w}})\stackrel{\simeq}{\longrightarrow}
%\pi_1(X^{\overline{w}})/\pi_1(X^{w}), 
%\] 
%(note also that both are $\simeq\Z_2$), we have 
%$\tau^*(u)(\gamma)=1=\overline{w}(\gamma)$. 
%Suppose $\gamma\in\Ker(\overline{w})=\pi_1(X^{\overline{w}})$. Choose 
%an element $\eta\notin\Ker(\overline{w})$. Then, since 
%$\gamma\cdot\eta^{-1}\notin\Ker(\overline{w})$, 
%\[
%\tau^*(u)(\gamma)=\tau^*(u)(\gamma\cdot\eta^{-1})+ \tau^*(u)(\eta)
%=0=\overline{w}(\gamma). 
%\]
%Thus $\tau^*(u)=\overline{w}$. 

(2) Since $p^*:H^0(X, \Z_2)\simeq H^0(X^{\overline{w}}, \Z_2)$ is 
an isomorphism, 
the beginning of the transfer exact sequence is as follows. 
\[
0\to
H^{0}(X, \Z_2)
\stackrel{\overline{w}\cup}{\to} 
H^{1}(X, \Z_2)
\stackrel{p^*}{\to} 
H^1(X^{\overline{w}}, \Z_2)
\stackrel{\tau^*}{\to} 
H^{1}(X, \Z_2)
\stackrel{\overline{w}\cup}{\to} 
H^{2}(X, \Z_2). 
\] 
We have 
\[
\rank_{\Z_2}H^1(X^{\overline{w}}, \Z_2)=
\rank_{\Z_2}H^1(X, \Z_2)-1+\rank_{\Z_2}\Im(\tau^*). 
\]
Since $\overline{w}\in\Ker(\overline{w}\cup)=\Im(\tau^*)$, 
$\rank_{\Z_2}\Im(\tau^*)\geq 1$. 
This completes the proof. 
%By $\tau^*(u)=\overline{w}\neq 0$, $\rank_{\Z_2}\Im(\tau^*)\geq 1$. 
%This completes the proof. 
\end{proof}

\begin{remark}
Without the assumption of $\Z_4$-liftability in Lemma \ref{lem:liftable} (2), 
the inequality between mod $2$ Betti numbers does not hold in general. 
For example, 
the double covering $S^m\longrightarrow\RP^m$ does not satisfy the 
inequality for $m\geq 2$. Indeed $\rank_{\Z_2}H^1(S^m, \Z_2)=0$ 
and $\rank_{\Z_2}H^1(\RP^m, \Z_2)=1$. 
\end{remark}

\begin{corollary}
\label{cor:increase}
\begin{enumerate}
\item 
Let $X_k\to X_{k-1}\to\cdots\to X_0$ ($k\geq 2$) 
be a tower of double coverings of connected spaces 
(i.e., each $X_i\to X_{i-1}$ is a double covering) 
such that $X_k\to X_0$ is a cyclic $\Z_{2^k}$-covering. 
Then 
\[
\overline{b_1}(X_{k-1})\geq 
\overline{b_1}(X_{k-2})\geq 
\cdots\geq 
\overline{b_1}(X_{1})\geq 
\overline{b_1}(X_0).  
\]
\item 
Let $w:\pi_1(X)\longrightarrow \Z$ be a surjective 
homomorphism, and $\overline{w}:\pi_1(X)\longrightarrow \Z_{2^{k}}$ be 
the induced surjection ($k\geq 1$). Then 
\[
\overline{b_1}(X^{\overline{w}})\geq\overline{b_1}(X). 
\]
\end{enumerate}
\end{corollary}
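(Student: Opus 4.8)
The plan is to derive both statements by iterating Lemma \ref{lem:liftable} (2): each individual inequality $\overline{b_1}(X_i)\geq\overline{b_1}(X_{i-1})$ will follow once we exhibit the double covering $X_i\to X_{i-1}$ as a $\Z_4$-liftable one, i.e.\ once we produce a surjection $v\colon\pi_1(X_{i-1})\twoheadrightarrow\Z_4$ whose reduction mod $2$ is the characteristic class of $X_i\to X_{i-1}$. The whole problem therefore reduces to bookkeeping of the relevant subgroups of the fundamental groups and to checking that the constructed $v$ induces exactly the covering $X_i\to X_{i-1}$.

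For (1), write $w_0\colon\pi_1(X_0)\twoheadrightarrow\Z_{2^k}$ for the characteristic class of the cyclic covering $X_k\to X_0$, so that $\pi_1(X_k)=\Ker(w_0)$. Since each $X_i\to X_{i-1}$ is a double covering and the total index is $2^k$, the images $\pi_1(X_i)/\pi_1(X_k)$ form a chain of subgroups of $\Z_{2^k}$ of indices $2^i$; as $\Z_{2^k}$ has a unique subgroup of each order, this chain is forced to be $\pi_1(X_i)=w_0^{-1}(2^i\Z_{2^k})$, and in particular $X_i\to X_0$ is the reduced cyclic $\Z_{2^i}$-covering. Now fix $i$ with $1\leq i\leq k-1$. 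Restricting $w_0$ to $\pi_1(X_{i-1})=w_0^{-1}(2^{i-1}\Z_{2^k})$ gives a surjection onto $2^{i-1}\Z_{2^k}\simeq\Z_{2^{k-i+1}}$, a cyclic group of order $\geq 4$ precisely because $i\leq k-1$. Composing with the quotient $\Z_{2^{k-i+1}}\twoheadrightarrow\Z_4$ yields a surjection $v\colon\pi_1(X_{i-1})\twoheadrightarrow\Z_4$, and a computation of kernels shows that the mod $2$ reduction of $v$ cuts out $w_0^{-1}(2^i\Z_{2^k})=\pi_1(X_i)$, so it is the characteristic class of $X_i\to X_{i-1}$. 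Lemma \ref{lem:liftable} (2) then gives $\overline{b_1}(X_i)\geq\overline{b_1}(X_{i-1})$ for all such $i$, which is the asserted chain. The step $X_k\to X_{k-1}$ is excluded exactly because there the restricted class lands in $2^{k-1}\Z_{2^k}\simeq\Z_2$, of order $2$, so this construction produces no $\Z_4$-lift.

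For (2), the infinite cyclic target removes that final obstruction. Set $X_0=X$ and let $X_1,\dots,X_k=X^{\overline{w}}$ be the intermediate coverings of the $\Z_{2^k}$-covering, identified as above with $\pi_1(X_i)=w^{-1}(2^i\Z)$ using $w\colon\pi_1(X)\twoheadrightarrow\Z$. For every $i$ with $1\leq i\leq k$, restricting $w$ to $\pi_1(X_{i-1})=w^{-1}(2^{i-1}\Z)$ surjects onto $2^{i-1}\Z\simeq\Z$, which always admits a surjection onto $\Z_4$; composing gives $v\colon\pi_1(X_{i-1})\twoheadrightarrow\Z_4$ whose mod $2$ reduction again cuts out $\pi_1(X_i)$. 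Hence every step, including the top step $X_k\to X_{k-1}$, is $\Z_4$-liftable, and iterating Lemma \ref{lem:liftable} (2) through all $k$ levels yields $\overline{b_1}(X^{\overline{w}})=\overline{b_1}(X_k)\geq\overline{b_1}(X_0)=\overline{b_1}(X)$.

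The only genuinely delicate point, and the main thing to get right, is the verification at each step that the mod $2$ reduction of the constructed $\Z_4$-surjection $v$ has kernel exactly $\pi_1(X_i)$, so that $v$ is a legitimate $\Z_4$-lift of the characteristic class of $X_i\to X_{i-1}$ and not of some other double covering. This is a routine but careful computation with the chains $2^j\Z_{2^k}$ (resp.\ $2^j\Z$) and the explicit quotient maps, resting on the uniqueness of subgroups of a cyclic group; once it is in hand, everything else is a direct application of the already-established Lemma \ref{lem:liftable}.
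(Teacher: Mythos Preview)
Your proof is correct and follows essentially the same strategy as the paper: both rest entirely on showing that each double covering $X_i\to X_{i-1}$ in the tower is $\Z_4$-liftable, so that Lemma~\ref{lem:liftable}(2) applies step by step. The only cosmetic differences are that the paper phrases part (1) as an induction on $k$ (whereas you verify every inequality directly by restricting $w_0$ and quotienting), and that the paper deduces (2) ``immediately from (1)'' (by extending the tower one further level via the surjection $\pi_1(X)\twoheadrightarrow\Z\twoheadrightarrow\Z_{2^{k+1}}$, so that $X^{\overline w}$ sits at level $k$ of a length-$(k{+}1)$ tower to which (1) applies), whereas you instead rerun the liftability argument with $\Z$ in place of $\Z_{2^k}$ and observe that the top step is now also $\Z_4$-liftable.
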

\begin{proof}
(1) is proved by induction on $k$ using Lemma \ref{lem:liftable}. 
(2) follows immediately from (1). 
\end{proof}

\subsection{Double coverings of arrangement complements}
\label{sec:comb}

Now let us formulate a problem asking whether 
the Betti numbers of finite coverings of arrangement complements 
are combinatorially determined. 

\begin{problem}
\label{pbm:comb}
Let $\A$ be an arrangement in $\C^\ell$. 
Let $G$ be a finite abelian group, and $R$ be an abelian group. 
Let $w\in A_G^1(\A)\simeq\Hom(\pi_1(M(\A)), G)$. 
Describe the cohomology groups $H^k(M(\A)^w, R)$ (or their ranks) 
in terms of $L(\A)$ and $w\in A_G^1(\A)$. 
\end{problem}

\begin{example}
Let $\A=\{H_1, \dots, H_n\}$ be an arrangement of $n$ hyperplanes and 
$G=\Z_{n+1}$ be the cyclic group of order $(n+1)$. 
Let $w:=e_1+e_2+\dots+e_n\in A_{\Z_{n+1}}^1(\A)$. Then 
the corresponding covering $M(\A)^w$ is homeomorphic to 
the Milnor fiber $F$ of the coning $c\A$. 
\end{example}

Problem \ref{pbm:comb} is widely open. 
Indeed, many research problems are 
related to Problem \ref{pbm:comb} \cite{cs-char, hironaka, lib-cov, sak}. 
For example, the notion of 
characteristic variety is deeply related to the computation of 
$\rank_{\Z}H^k(M(\A)^w, \Z)$. 
See \cite{suc-fun, suc-mil} for surveys of the topic. 

As a special case of Problem \ref{pbm:comb}, 
we now prove that the mod $2$ Betti numbers of the double covering 
$M(\A)^w$ of an arrangement complement are combinatorially determined. 
\begin{theorem}
\label{thm:2cover}
Let $w\in A_{\Z_2}^1(\A), w\neq 0$. Then the $k$-th mod $2$ Betti number 
($k\geq 0$) of the double covering $M(\A)^w$ is expressed as follows. 
\begin{equation}
\label{eq:mod2Bettiformula}
\overline{b_k}(M(\A)^w)
=b_k(M(\A))+\rank_{\Z_2} H^k(A_{\Z_2}^\bullet(\A), w\wedge-).  
\end{equation}
\end{theorem}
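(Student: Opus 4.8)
The plan is to exploit the transfer long exact sequence recalled in \S\ref{sec:transfer} together with the Orlik–Solomon description $H^*(M(\A),\Z_2)\simeq A_{\Z_2}^*(\A)$, and to identify the cup product $\overline{w}\cup-$ with the Aomoto differential $w\wedge-$. First I would write down the full transfer sequence for the double covering $p:M(\A)^w\to M(\A)$ with characteristic class $w\in H^1(M(\A),\Z_2)$:
\[
\cdots\to
H^{k-1}(M(\A),\Z_2)
\stackrel{w\cup}{\to}
H^{k}(M(\A),\Z_2)
\stackrel{p^*}{\to}
H^k(M(\A)^w,\Z_2)
\stackrel{\tau^*}{\to}
H^{k}(M(\A),\Z_2)
\stackrel{w\cup}{\to}
H^{k+1}(M(\A),\Z_2)
\to\cdots.
\]
The key observation is that under the isomorphism $H^*(M(\A),\Z_2)\simeq A_{\Z_2}^*(\A)$ coming from Orlik–Solomon, the connecting/cup map $w\cup-$ is exactly the Aomoto differential $w\wedge-$, since the cup product on $H^*(M(\A),\Z_2)$ is the multiplication in $A_{\Z_2}^*(\A)$ and $w$ is a degree-one class. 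Thus the maps labelled $w\cup$ in the transfer sequence are literally the differentials of the mod $2$ Aomoto complex $(A_{\Z_2}^\bullet(\A),w\wedge-)$.

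Next I would read off the rank of $H^k(M(\A)^w,\Z_2)$ from exactness. Breaking the long exact sequence into short exact pieces at the $k$-th spot gives
\[
0\to
\operatorname{coker}\bigl(w\cup:H^{k-1}\to H^{k}\bigr)
\to
H^k(M(\A)^w,\Z_2)
\to
\ker\bigl(w\cup:H^{k}\to H^{k+1}\bigr)
\to 0,
\]
so that
\[
\overline{b_k}(M(\A)^w)
=\dim_{\Z_2}\operatorname{coker}(w\wedge-)_{k-1\to k}
+\dim_{\Z_2}\ker(w\wedge-)_{k\to k+1}.
\]
Writing $r_k=\rank_{\Z_2}(w\wedge-:A_{\Z_2}^k\to A_{\Z_2}^{k+1})$ and $a_k=\dim_{\Z_2}A_{\Z_2}^k(\A)=b_k(M(\A))$ (the latter because the Orlik–Solomon algebra has the same dimension in each degree over any field, the integral cohomology being free), the cokernel term is $a_k-r_{k-1}$ and the kernel term is $a_k-r_k$. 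Hence
\[
\overline{b_k}(M(\A)^w)=2a_k-r_{k-1}-r_k,
\]
while by definition the $k$-th Aomoto cohomology has dimension $\dim_{\Z_2}H^k(A_{\Z_2}^\bullet,w\wedge-)=a_k-r_{k-1}-r_k$. Subtracting yields $\overline{b_k}(M(\A)^w)=a_k+\dim_{\Z_2}H^k(A_{\Z_2}^\bullet,w\wedge-)=b_k(M(\A))+\rank_{\Z_2}H^k(A_{\Z_2}^\bullet(\A),w\wedge-)$, which is exactly \eqref{eq:mod2Bettiformula}.

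The main obstacle I anticipate is the identification of the transfer-sequence map $w\cup-$ with the Aomoto differential $w\wedge-$. The transfer sequence is a purely topological Gysin-type sequence associated to the double covering, and one must check that the connecting map is cup product with the characteristic class $\overline{w}$; this is standard (it is the content of the cited reference \cite[\S 4.3]{hau}), but care is needed to see that the \emph{same} class $w\in A_{\Z_2}^1(\A)$ that classifies the covering is the class whose cup product appears. Once this compatibility is in place, the rest is a bookkeeping argument with ranks of a two-step complex, and I would make sure to treat the boundary degrees $k=0$ and the top degree correctly (where one of $r_{k-1}$ or $r_k$ is automatically zero), so that the formula holds for all $k\geq 0$. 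The hypothesis $w\neq 0$ guarantees the covering is connected, which is what makes $p^*$ injective in degree zero and pins down the start of the sequence as used in Lemma \ref{lem:liftable}(2).
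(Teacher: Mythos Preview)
Your proof is correct and is essentially identical to the paper's: both break the transfer long exact sequence at degree $k$, identify $w\cup-$ with the Aomoto differential $w\wedge-$ via the Orlik--Solomon ring isomorphism, and read off the rank formula (the paper phrases the resulting four-term exact sequence as $0\to\mathcal{B}^k\to A_{\Z_2}^k(\A)\to H^k(M(\A)^w,\Z_2)\to\mathcal{Z}^k\to 0$, which is your cokernel/kernel decomposition). The ``obstacle'' you flag is not a genuine issue here, since the Orlik--Solomon isomorphism is explicitly an isomorphism of graded rings, so cup product with $w$ is literally the wedge with the corresponding element of $A_{\Z_2}^1(\A)$.
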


\begin{proof}
Denote 
$\mathcal{Z}^k:=\Ker(w\wedge: A_{\Z_2}^{k}(\A)\to A_{\Z_2}^{k+1}(\A))$ 
and 
$\mathcal{B}^k:=\Im(w\wedge: A_{\Z_2}^{k-1}(\A)\to A_{\Z_2}^{k}(\A))$. 
From the transfer long exact sequence, we have the following 
exact sequence. 
\[
0
\longrightarrow
\mathcal{B}^k
\longrightarrow
A_{\Z_2}^k(\A)
\longrightarrow
H^k(M(\A)^w, \Z_2)
\longrightarrow
\mathcal{Z}^k
\longrightarrow
0. 
\]
Since $\mathcal{Z}^k/\mathcal{B}^k\simeq H^k(A_{\Z_2}^\bullet(\A), w\wedge-)$, 
\[
\begin{split}
\rank_{\Z_2} H^k(M(\A)^w, \Z_2)
&=
\rank_{\Z_2} A_{\Z_2}^k(\A)+\rank_{\Z_2}\mathcal{Z}^k - \rank_{\Z_2} \mathcal{B}^k\\
&=
b_k(M(\A))+\rank_{\Z_2} H^k(A_{\Z_2}^\bullet(\A), w\wedge-). 
\end{split}
\]
\end{proof}
\begin{remark}
Note that Theorem \ref{thm:2cover} 
%\eqref{eq:mod2Bettiformula} 
gives only mod $2$ Betti numbers. It is not clear whether we can 
combinatorially describe Betti numbers or (mod $2$) cohomology ring structure 
of the double covering. 
\end{remark}

\section{$2$-torsions in Milnor fiber homology}

\subsection{Icosidodecahedral arrangement $\ID$}
\label{sec:icosido}

\begin{definition}
The icosidodecahedral arrangement $\ID$ is the coning of the 
$15$ affine lines in Figure \ref{fig:triplestar}. Namely, $\ID$ consists of 
$16$ planes in $\R^3$. 
(Another deconing of $\ID$ is depicted in Figure \ref{fig:triangle}.) 
\end{definition}

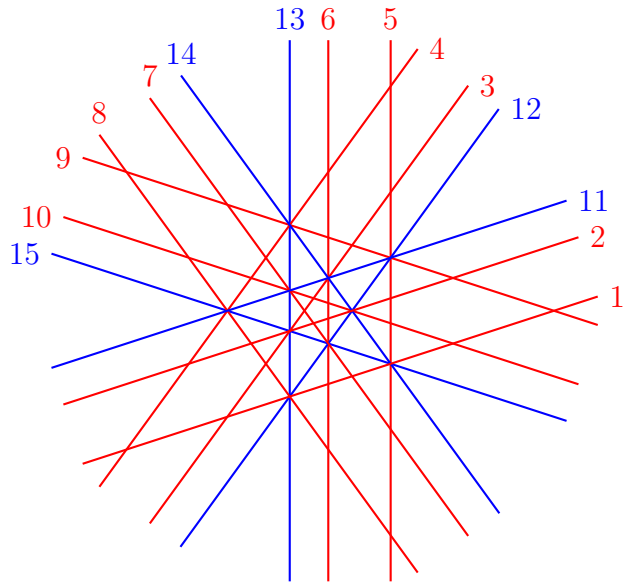
\begin{figure}[htbp]
\centering
\begin{tikzpicture}[scale=1.2]

%格子
%\draw [help lines] (0,0) grid (12,6);%(0,0)から(10,4)までの"細線の方眼"

%\coordinate (A1) at (0,0);
%\coordinate (-A2) at (1.8,4.5);
%\coordinate (A3) at (1.5,0.5);
%\coordinate (A4) at (3.3, 0.3);
%\coordinate (A5) at (A3) ++(A4);
%%(A3)+(A4) coordinate (A5);

%\draw[very thick, blue] (A3) +(A1) -- +(-A2);

%\draw[black] (0,0) circle(1cm);
%\draw (0,0)--(36:1cm);
\draw[thick, red] (0.809,0)++(0,-3) -- +(90:6) node[above, red]{$5$};
\draw[thick, blue] (-0.309,0)++(0,-3) -- +(90:6) node[above, blue]{$13$};
\draw[thick, red] (0.118,0)++(0,-3) -- +(90:6) node[above, red]{$6$};

\draw[thick, red] (72:0.809)++(162:3)  node[left, red]{$9$}-- +(342:6);
\draw[thick, blue] (252:0.309)++(162:3)  node[left, blue]{$15$}-- +(342:6);
\draw[thick, red] (72:0.118)++(162:3)  node[left, red]{$10$}-- +(342:6);

\draw[thick, red] (144:0.809)++(234:3) -- +(54:6) node[right, red]{$4$};
\draw[thick, blue] (322:0.309)++(234:3) -- +(54:6) node[right, blue]{$12$};
\draw[thick, red] (144:0.118)++(234:3) -- +(54:6) node[right, red]{$3$};

\draw[thick, red] (216:0.809)++(126:3)  node[above, red]{$8$}-- +(306:6);
\draw[thick, blue] (36:0.309)++(126:3)  node[above, blue]{$14$}-- +(306:6);
\draw[thick, red] (216:0.118)++(126:3)  node[above, red]{$7$}-- +(306:6);

\draw[thick, red] (288:0.809)++(198:3) -- +(18:6) node[right, red]{$1$};
\draw[thick, blue] (108:0.309)++(198:3) -- +(18:6) node[right, blue]{$11$};
\draw[thick, red] (288:0.118)++(198:3) -- +(18:6) node[right, red]{$2$};

%\draw[thick] (0,0)++(220:1)--+(40:3) node[above]{$H_1$};
\end{tikzpicture}
\caption{A deconing $d_{H_{16}}\ID$ 
of the icosidodecahedral arrangement (with respect 
to a blue line at infinity ${\color{blue} H_{16}}$). The coloring expresses 
the nontrivial cocycle in the mod $2$ Aomoto complex (\S \ref{sec:mod2icosi}).} 
\label{fig:triplestar}
\end{figure}
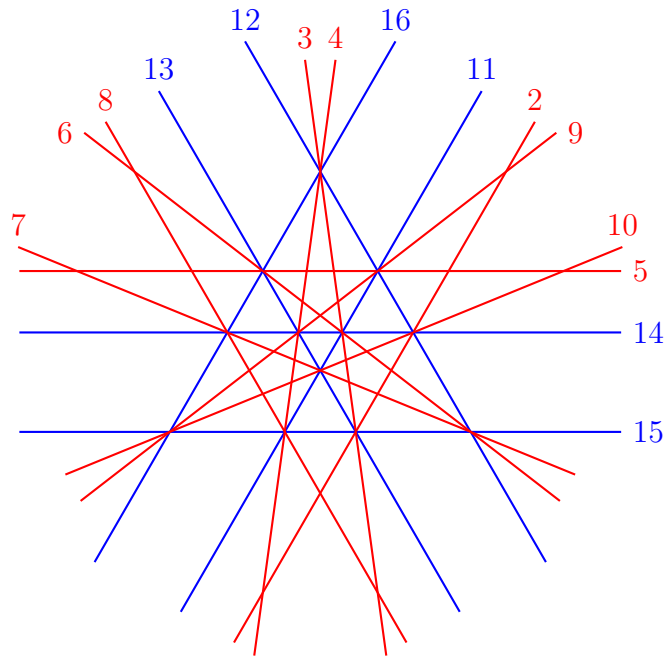
\begin{figure}[htbp]
\centering
\begin{tikzpicture}[scale=1]

\draw[thick, blue] (0,0)++(180:2)--+(8,0) node[right, blue]{$15$};
\draw[thick, blue] (0,0)++(180:2)++(0,1.323169)--+(8,0) node[right, blue]{$14$};
\draw[thick, red] (0,0)++(180:2)++(0,2.140325)--+(8,0) node[right, red]{$5$};

\draw[thick, blue] (240:2)--+(60:8)  node[above, blue]{$16$};
\draw[thick, blue] (330:1.3231)++(240:2)--+(60:8)  node[above, blue]{$11$};
\draw[thick, red] (330:2.14)++(240:2)--+(60:8) node[above, red]{$2$};

\draw[thick, blue] (4,0)++(300:2)--+(120:8) node[above, blue]{$12$};
\draw[thick, blue] (210:1.3231)++(4,0)++(300:2)--+(120:8) node[above, blue]{$13$};
\draw[thick, red] (210:2.14)++(4,0)++(300:2)--+(120:8) node[above, red]{$8$};

\draw[thick, red] (202.23875:1.5)--+(22.23875:8) node[above, red]{$10$};
\draw[thick, red] (217.76124:1.5)--+(37.76124:8) node[right, red]{$9$};

\draw[thick, red] (4,0)++(337.761244:1.5)--+(157.761244:8) node[above, red]{$7$};
\draw[thick, red] (4,0)++(322.238756:1.5)--+(142.238756:8) node[left, red]{$6$};

\draw[thick, red] (60:4)++(97.761244:1.5) node[above, red]{$3$}--+(277.761244:8);
\draw[thick, red] (60:4)++(82.238756:1.5) node[above, red]{$4$}--+(262.238756:8);

\end{tikzpicture}
\caption{Another deconing $d_{H_1}\ID$ 
of the icosidodecahedral arrangement (with respect to a red line 
{\color{red} $H_1$} at infinity)}
\label{fig:triangle}
\end{figure}

Let us briefly comment on the naming ``Icosidodecahedral arrangement''. 
Actually, $\ID$ 
can be constructed from the icosidodecahedron as follows, 
which seems to be the most symmetric realization of $\ID$. 

The icosidodecahedron (Figure \ref{fig:icosidodeca}) 
is a polyhedron which is commonly 
obtained as vertex truncations of the icosahedron and the dodecahedron 
(by truncating mid points of edges). 
An icosidodecahedron has $32$ faces ($20$ triangles and $12$ pentagons), 
$60$ edges and $30$ vertices (Figure \ref{fig:icosidodeca}). 
We can choose $10$ edges to form the equator of the polyhedron, 
which are lying on a plane in $\R^3$. Similarly, 
we obtain $6$ planes in total (they correspond to the blue lines 
$H_{11}, \dots, H_{15}, H_{16}$ in 
Figures \ref{fig:triplestar} and \ref{fig:triangle}).  

Each pentagonal face of the icosidodecahedron 
has five diagonals. There are 
$60$ such diagonals in all. If we choose appropriately consecutive six of them, 
they are lying on a plane (the red diagonals in Figure \ref{fig:icosidodeca}). 
We obtain in this manner $10$ planes consisting of diagonals (Figure \ref{fig:idarr}). 
As a result, we have an arrangement of $16$ planes in $\R^3$, which is 
isomorphic to $\ID$. 

For computations, 
we use mainly the deconing from Figure \ref{fig:triplestar}. 

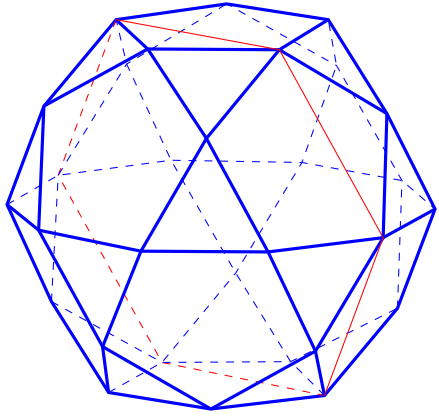
\begin{figure}[htbp]
\centering
\begin{tikzpicture}[scale=1]

%格子
%\draw [help lines] (0,0) grid (12,6);%(0,0)から(10,4)までの"細線の方眼"

\coordinate (a01) at (0.87,1.85);
\coordinate (a02) at (1.63,0.64);
\coordinate (a03) at (2.99,0.42);
\coordinate (a04) at (4.5,0.6);
\coordinate (a05) at (5.47,1.76);
\coordinate (a06) at (5.97,3.08);
\coordinate (a07) at (5.33,4.35);
\coordinate (a08) at (4.55,5.6);
\coordinate (a09) at (3.2,5.81);
\coordinate (a10) at (1.73,5.6);
\coordinate (a11) at (0.77,4.45);
\coordinate (a12) at (0.28,3.14);
\coordinate (a13) at (2.34,1.04);
\coordinate (a14) at (4.05,1.05);
\coordinate (a15) at (3.32,2.23);
\coordinate (a16) at (5.53,3.46);
\coordinate (a17) at (4.2,3.7);
\coordinate (a18) at (4.66,4.99);
\coordinate (a19) at (1.87,5.01);
\coordinate (a20) at (2.46,3.73);
\coordinate (a21) at (0.96,3.53);
\coordinate (a22) at (2.15,5.21);
\coordinate (a23) at (3.9,5.2);
\coordinate (a24) at (2.93,4.02);
\coordinate (a25) at (3.75,2.51);
\coordinate (a26) at (2.06,2.52);
\coordinate (a27) at (0.7,2.8);
\coordinate (a28) at (1.55,1.25);
\coordinate (a29) at (4.38,1.19);
\coordinate (a30) at (5.28,2.7);

\draw[very thick, blue] (a01)--(a02)--(a03)--(a04)--(a05)--(a06)--(a07)--(a08)--(a09)--(a10)--(a11)--(a12)--cycle;

\draw [dashed, thin, red] (a10)--(a21)--(a13)--(a04);

\draw[very thick, blue] (a12)--(a27)--(a26)--(a25)--(a30)--(a06);
\draw[very thick, blue] (a02)--(a28)--(a26)--(a24)--(a23)--(a08);
\draw[very thick, blue] (a03)--(a28)--(a27)--(a11);
\draw[very thick, blue] (a03)--(a29)--(a30)--(a07);
\draw[very thick, blue] (a04)--(a29)--(a25)--(a24)--(a22)--(a10);
\draw[very thick, blue] (a07)--(a23)--(a22)--(a11);

\draw [dashed, thin, blue] (a12)--(a21)--(a20)--(a17)--(a16)--(a06);
\draw [dashed, thin, blue] (a01)--(a21)--(a19)--(a09);
\draw [dashed, thin, blue] (a01)--(a13)--(a14)--(a05);
\draw [dashed, thin, blue] (a02)--(a13)--(a15)--(a17)--(a18)--(a08);
\draw [dashed, thin, blue] (a04)--(a14)--(a15)--(a20)--(a19)--(a10);
\draw [dashed, thin, blue] (a05)--(a16)--(a18)--(a09);

%\draw [red] (a09)--(a23)--(a25)--(a03);
%\draw [dashed, very thin, red] (a03)--(a13)--(a20)--(a09);

\draw [red] (a04)--(a30)--(a23)--(a10);

%\draw [green!50!black] (a05)--(a30)--(a24)--(a11);
%\draw [dashed, very thin, green!50!black] (a11)--(a21)--(a15)--(a05);

%\draw [orange!70!black] (a01)--(a27)--(a24)--(a07);
%\draw [dashed, very thin, orange!70!black] (a07)--(a16)--(a15)--(a01);

\end{tikzpicture}
\caption{Icosidodecahedron (blue) and a diagonal plane (red)}
\label{fig:icosidodeca}
\end{figure}

\begin{figure}[htbp]
\centering
\begin{tikzpicture}[scale=1]

%格子
%\draw [help lines] (0,0) grid (12,6);%(0,0)から(10,4)までの"細線の方眼"

\coordinate (a01) at (0.87,1.85);
\coordinate (a02) at (1.63,0.64);
\coordinate (a03) at (2.99,0.42);
\coordinate (a04) at (4.5,0.6);
\coordinate (a05) at (5.47,1.76);
\coordinate (a06) at (5.97,3.08);
\coordinate (a07) at (5.33,4.35);
\coordinate (a08) at (4.55,5.6);
\coordinate (a09) at (3.2,5.81);
\coordinate (a10) at (1.73,5.6);
\coordinate (a11) at (0.77,4.45);
\coordinate (a12) at (0.28,3.14);
\coordinate (a13) at (2.34,1.04);
\coordinate (a14) at (4.05,1.05);
\coordinate (a15) at (3.32,2.23);
\coordinate (a16) at (5.53,3.46);
\coordinate (a17) at (4.2,3.7);
\coordinate (a18) at (4.66,4.99);
\coordinate (a19) at (1.87,5.01);
\coordinate (a20) at (2.46,3.73);
\coordinate (a21) at (0.96,3.53);
\coordinate (a22) at (2.15,5.21);
\coordinate (a23) at (3.9,5.2);
\coordinate (a24) at (2.93,4.02);
\coordinate (a25) at (3.75,2.51);
\coordinate (a26) at (2.06,2.52);
\coordinate (a27) at (0.7,2.8);
\coordinate (a28) at (1.55,1.25);
\coordinate (a29) at (4.38,1.19);
\coordinate (a30) at (5.28,2.7);
\coordinate (c) at (3.12,3.12);

\draw[very thick, blue] (a01)--(a02)--(a03)--(a04)--(a05)--(a06)--(a07)--(a08)--(a09)--(a10)--(a11)--(a12)--cycle;

\draw[very thick, blue] (a12)--(a27)--(a26)--(a25)--(a30)--(a06);
\draw[very thick, blue] (a02)--(a28)--(a26)--(a24)--(a23)--(a08);
\draw[very thick, blue] (a03)--(a28)--(a27)--(a11);
\draw[very thick, blue] (a03)--(a29)--(a30)--(a07);
\draw[very thick, blue] (a04)--(a29)--(a25)--(a24)--(a22)--(a10);
\draw[very thick, blue] (a07)--(a23)--(a22)--(a11);

%\draw [dashed, thin, blue] (a12)--(a21)--(a20)--(a17)--(a16)--(a06);
%\draw [dashed, thin, blue] (a01)--(a21)--(a19)--(a09);
%\draw [dashed, thin, blue] (a01)--(a13)--(a14)--(a05);
%\draw [dashed, thin, blue] (a02)--(a13)--(a15)--(a17)--(a18)--(a08);
%\draw [dashed, thin, blue] (a04)--(a14)--(a15)--(a20)--(a19)--(a10);
%\draw [dashed, thin, blue] (a05)--(a16)--(a18)--(a09);

\draw [thick, red] (a01)--(a27)--(a24)--(a07);
\draw [thick, orange] (a01)--(a28)--(a25)--(a07);
\draw [thick, green] (a12)--(a28)--(a29)--(a06);
\draw [red] (a02)--(a12);
\draw [thick, black] (a02)--(a27)--(a22)--(a08);
\draw [thick, purple] (a03)--(a26)--(a22)--(a09);
\draw [thick, pink!30!black] (a03)--(a25)--(a23)--(a09);
\draw [thick, olive] (a04)--(a30)--(a23)--(a10);
\draw [red] (a04)--(a06);
\draw [thick, gray] (a05)--(a29)--(a26)--(a11);
\draw [thick, green!50!black] (a05)--(a30)--(a24)--(a11);
\draw [red] (a08)--(a10);
%\draw [red] (a12)--(a28)--(a29)--(a06);

\end{tikzpicture}
\caption{Icosidodecahedral arrangement}
\label{fig:idarr}
\end{figure}
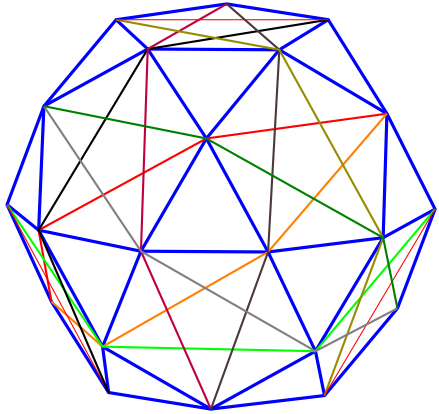

\subsection{Mod $2$ Aomoto complex of icosidodecahedral arrangement}

\label{sec:mod2icosi}

Let $\decID :=d_{H_{16}}\ID$ be the affine arrangement in Figure \ref{fig:triplestar}. 
Let $w_2:=e_1+\cdots+e_{15}\in A_{\Z_2}^1(\decID)$. For a subset 
$\scS\subset\decID$, let $e_{\scS}:=\sum_{H_i\in\scS}e_i$. Obviously, 
every element in $A_{\Z_2}^1(\decID)$ can be expressed as 
$e_{\scS}$, where $\scS$ is a subset of $\decID$. 

\begin{proposition}
\label{mod2aomoto}
$\rank_{\Z_2}H^1(A_{\Z_2}^\bullet(\decID), w_2\wedge-)=1$. 
\end{proposition}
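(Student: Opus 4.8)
The plan is to identify $H^1(A_{\Z_2}^\bullet(\decID), w_2\wedge-)$ with the kernel of the single multiplication map $w_2\wedge-\colon A_{\Z_2}^1(\decID)\to A_{\Z_2}^2(\decID)$ modulo the line $\langle w_2\rangle$. Since $w_2$ has degree one we have $w_2\wedge w_2=0$, so the image of $w_2\wedge-\colon A_{\Z_2}^0(\decID)\to A_{\Z_2}^1(\decID)$ is exactly $\langle w_2\rangle$, which is one-dimensional because $w_2\neq 0$. Hence $\rank_{\Z_2}H^1=\dim_{\Z_2}\Ker(w_2\wedge-)-1$, and it suffices to prove $\dim_{\Z_2}\Ker(w_2\wedge-)=2$.

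First I would localize the cocycle condition. By Brieskorn's Lemma, $A_{\Z_2}^2(\decID)=\bigoplus_{X}A_{\Z_2}^2((\decID)_X)$, the sum running over the rank-two flats (multiple points) $X$. Writing $\eta=\sum_{i=1}^{15}a_ie_i$, the equation $w_2\wedge\eta=0$ is therefore equivalent to the family of local equations $(w_2|_X)\wedge(\eta|_X)=0$, one for each multiple point $X$, where $|_X$ denotes restriction to the set $I_X$ of lines through $X$. Because every coefficient of $w_2$ equals $1$, Lemma \ref{lem:lines} (applied over the field $\Z_2$, with the double-point case $\#I_X=2$ checked directly) turns each local equation into a linear condition on the $a_i$: at a point $X$ of odd multiplicity all the $a_i$ with $i\in I_X$ must coincide, while at a point of even multiplicity one gets $\sum_{i\in I_X}a_i=0$.

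Next I would solve this $\Z_2$-linear system. The lines split into the ten red and the five blue lines of Figure \ref{fig:triplestar}. The plan is to exhibit the nontrivial cocycle $\eta_0:=e_1+\cdots+e_{10}$ (the red lines) and verify point by point that $\eta_0\in\Ker(w_2\wedge-)$: every odd multiple point of $\decID$ is monochromatic, so the ``all equal'' condition holds there automatically, while every even multiple point carries an even number of red lines, so $\sum_{i\in I_X}(\eta_0)_i=0$. Since $\eta_0\neq 0$ and $\eta_0\neq w_2$, this already gives $\dim_{\Z_2}\Ker(w_2\wedge-)\geq 2$.

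For the reverse inequality I would run a propagation argument and this is where the main obstacle lies. The odd multiple points (in particular the triple points) force any cocycle $\eta$ to be constant along each color-class that they connect; using the explicit incidences of $\decID$, organized by its five-fold symmetry, one must show that the ten red lines are all tied together into one class and the five blue lines into another, so that $\eta$ is governed by just two parameters $a_{\mathrm{red}}$ and $a_{\mathrm{blue}}$. It then remains to check that no multiple point imposes $a_{\mathrm{red}}=a_{\mathrm{blue}}$, i.e. that no point carries an odd number of red lines together with an odd number of blue lines, which keeps both parameters free and yields $\dim_{\Z_2}\Ker(w_2\wedge-)\leq 2$. The delicate part is precisely the combinatorial bookkeeping behind these claims: enumerating all multiple points with their multiplicities and colors, confirming monochromaticity of the odd points and the even red-count at the even points, and — most importantly — establishing the connectivity that ties each color-class together, since it is exactly this connectivity that forbids extra cocycles and pins the rank to $1$.
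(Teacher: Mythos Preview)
Your overall strategy---localize via Brieskorn, translate each local condition through Lemma~\ref{lem:lines}, exhibit $\eta_0=e_1+\cdots+e_{10}$ as a nontrivial cocycle, then run a propagation argument for the upper bound---is exactly the paper's approach. But your combinatorial picture of $\decID$ is wrong at the key step, and the argument as written would stall.

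There are \emph{no} triple points, and in fact no odd-multiplicity points at all. Every multiple point of $\decID$ is either a double point where two red lines meet, or a quadruple point where two red and two blue lines meet (this is the paper's observation). So your verification that $\eta_0$ is a cocycle is correct only by accident: the clause ``every odd multiple point is monochromatic'' is vacuous, and what actually matters is that every multiple point carries exactly two red lines, hence an even red count. More seriously, the propagation you attribute to odd points cannot happen. What does the work is that at a double point the even-multiplicity condition $\sum_{i\in I_X}a_i=0$ reads $a_i+a_j=0$, i.e.\ $a_i=a_j$; since all double points are red--red and they connect the ten red lines, this forces a single value $a_{\mathrm{red}}$. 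Then at each quadruple point the condition becomes $2a_{\mathrm{red}}+a_{b_1}+a_{b_2}=0$, hence $a_{b_1}=a_{b_2}$, and connectivity of the blue lines through quadruple points yields a single $a_{\mathrm{blue}}$. Finally, no point links the two colours (at each point $2a_{\mathrm{red}}+2a_{\mathrm{blue}}=0$ is automatic), so $\dim_{\Z_2}\Ker(w_2\wedge-)=2$.

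In short: replace ``triple points force constancy'' by ``double points force $a_i=a_j$ on the red class, then quadruple points propagate constancy to the blue class,'' and your proof becomes the paper's.
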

\begin{proof}
Let $\scS_0:=\{1, 2, \dots, 10\}$ (red in Figure \ref{fig:triplestar}) 
 and $\scS_1:=\{11, 12, 13, 14, 15\}$ (blue in Figure \ref{fig:triplestar}). 
Note that at each intersection $p$, the localization $(\decID)_p$ consists of 
either 
\begin{itemize}
\item
two lines from $\scS_0$, or 
\item 
two from $\scS_0$ and two from $\scS_1$. 
\end{itemize}
This property, together with Lemma \ref{lem:lines}, 
enables us to conclude 
\[
w_2\wedge e_{\scS_0}=w_2\wedge e_{\scS_1}=0. 
\]
Therefore, $\rank_{\Z_2}H^1(A_{\Z_2}^\bullet(\decID), w_2\wedge-)\geq 1$. 

Next we show that $[e_{\scS_0}]=[e_{\scS_1}]$ is the unique nonzero 
cohomology class. 
Suppose $w_2\wedge e_{\scS}=0$ for some $\scS\subset\decID$. 
If $\scS\cap\scS_0\neq\emptyset$, choose an $i\in\scS\cap\scS_0$, 
then by Lemma \ref{lem:lines}, all $j$ such that $i$ and $j$ intersect at a double 
point must be contained in $\scS$. Thus, if $\scS\cap\scS_0\neq\emptyset$, 
$\scS\supset\scS_0$. By replacing $e_{\scS}$ by $e_{\scS}+w_2=
e_{\decID\smallsetminus\scS}$, 
we may assume $\scS\cap\scS_0=\emptyset$, in other words, 
$\scS\subset\scS_1$. 
Again by Lemma \ref{lem:lines}, $\scS$ must be either $\scS_1$ or $\emptyset$. 
This completes the proof. 
\end{proof}

\subsection{Milnor fiber of icosidodecahedral arrangement}
\label{sec:2toricosi}

\begin{theorem}
\label{milnortorsion}
Let $F_{\ID}$ be the Milnor fiber of the icosidodecahedral arrangement $\ID$. 
Then, 
\begin{enumerate}
\item 
$\rank_{\Z}H^1(F_{\ID}, \Z)=15$. 
\item 
$\rank_{\Z_2}H^1(F_{\ID}, \Z_2)\geq 16$. 
\item 
The integral first homology group $H_1(F_{\ID}, \Z)$ has $2$-torsion. 
\end{enumerate}
\end{theorem}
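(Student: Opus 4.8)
The plan is to treat the numerical assertions (1) and (2) first and then to extract the torsion statement (3) from them by a Universal Coefficient Theorem argument. Throughout I use the identification $F_{\ID}=M(\decID)^{w}$ furnished by the Example following Problem \ref{pbm:comb}: here $\decID=d_{H_{16}}\ID$ is the $15$-line arrangement of Figure \ref{fig:triplestar}, and $w=e_1+\cdots+e_{15}$ is read as the winding-number homomorphism $w\colon\pi_1(M(\decID))\to\Z$, whose reduction modulo $16$ defines the monodromy $\Z_{16}$-covering $F_{\ID}$. Since $w$ is surjective onto $\Z$, its reduction is surjective onto $\Z_{16}$ and $F_{\ID}$ is connected, so the covering-space tools of \S\ref{sec:fincovercomb} apply. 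I expect (2) to be a direct application of the machinery already set up, (3) to be a short homological step, and the determination of the full rational cohomology in (1) to be the true obstacle.

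For (2) I would factor the covering $F_{\ID}\to M(\decID)$ through the intermediate mod $2$ double covering $X_1:=M(\decID)^{w_2}$, where $w_2=e_1+\cdots+e_{15}\in A_{\Z_2}^1(\decID)$ is the reduction of $w$. On the one hand, Theorem \ref{thm:2cover} computes
\[
\overline{b_1}(X_1)=b_1(M(\decID))+\rank_{\Z_2}H^1(A_{\Z_2}^\bullet(\decID),w_2\wedge-)=15+1=16,
\]
using $b_1(M(\decID))=15$ for a $15$-line complement and Proposition \ref{mod2aomoto} for the Aomoto term. On the other hand, restricting $w$ to $\pi_1(X_1)=w^{-1}(2\Z)$ and dividing by $2$ yields a surjection $w'\colon\pi_1(X_1)\to\Z$ with $X_1^{\,w'\bmod 8}=F_{\ID}$, exhibiting $F_{\ID}\to X_1$ as a $\Z_{2^3}$-covering induced from a $\Z$-valued class. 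Corollary \ref{cor:increase}(2), applied with base $X_1$, then gives $\overline{b_1}(F_{\ID})\ge\overline{b_1}(X_1)=16$, which is (2). The point of basing at $X_1$ rather than at $M(\decID)$ is that the Aomoto cohomology already produces the extra unit of mod $2$ first Betti number at the first level, and the $\Z$-liftability then propagates it up the tower without loss.

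For (1) I would pass to $\C$ and use the monodromy eigenspace decomposition $H^1(F_{\ID},\C)=\bigoplus_{\zeta^{16}=1}H^1(F_{\ID},\C)_\zeta$, noting $H^1(F_{\ID},\C)_1\cong H^1(M(\decID),\C)$ has dimension $15$. Thus $\rank_\Z H^1(F_{\ID},\Z)=15$ is equivalent to the vanishing of $H^1(F_{\ID},\C)_\zeta$ for every $\zeta\ne 1$. I would compute each such eigenspace as the first cohomology $H^1(M(\decID),\mathcal L_\zeta)$ of the rank-one local system with monodromy $\zeta$ around every line, and by the standard comparison reduce it to the complexified Aomoto complex $(A_\C^\bullet(\decID),\omega\wedge-)$ at the weight $\omega=\tfrac{a}{16}w$ with $\zeta=e^{2\pi i a/16}$. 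Here the combinatorics of $\decID$ is decisive: as recorded in the proof of Proposition \ref{mod2aomoto}, every intersection point is a double point or a quadruple point (two lines of $\scS_0$ and two of $\scS_1$), and via Brieskorn's decomposition together with the local computation of Lemma \ref{lem:lines} I would rule out a nonzero global Aomoto cocycle for each nontrivial $\zeta$. The main obstacle is precisely the quadruple points: they can a priori support resonance at fourth roots of unity, so the crux is to show that no such local contributions patch together into a nonzero class over $\C$.

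Finally, (3) follows by the Universal Coefficient Theorem. Writing $H_1(F_{\ID},\Z)\cong\Z^{b_1}\oplus T$ with $T$ the torsion subgroup, and using that $F_{\ID}$ is connected (so $H_0$ is free), the theorem gives
\[
\overline{b_1}(F_{\ID})=b_1(F_{\ID})+\dim_{\Z_2}\bigl(T\otimes\Z_2\bigr).
\]
By (1) we have $b_1(F_{\ID})=15$, while by (2) we have $\overline{b_1}(F_{\ID})\ge 16$; hence $\dim_{\Z_2}(T\otimes\Z_2)\ge 1$. Therefore $T$ has a cyclic summand of even order, that is, $H_1(F_{\ID},\Z)$ has nontrivial $2$-torsion, as claimed.
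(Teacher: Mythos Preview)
Your treatment of (2) and (3) is correct and is exactly the route the paper takes: compute $\overline{b_1}$ of the intermediate double cover via Theorem~\ref{thm:2cover} and Proposition~\ref{mod2aomoto}, then push the inequality up the $\Z_{2^k}$-tower using Corollary~\ref{cor:increase}, and finally deduce torsion from the universal coefficient theorem. Your explicit factoring through $X_1$ and the restricted class $w'$ is precisely the content hidden in the paper's one-line reference to those three results.

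For (1) your overall strategy coincides with the paper's, but one step would fail as written. The ``standard comparison'' you invoke is the ESV/STV theorem, and it requires an admissibility hypothesis: for the chosen exponents $a_i$ (with $\sum_i a_i=0$ over the projective arrangement), the sum $\sum_{H_i\ni p}a_i$ must avoid $\Z_{>0}$ at every multiple point. With your uniform choice $a_1=\cdots=a_{15}=a/16$ (and hence $a_{16}=-15a/16$), each affine quadruple point contributes $4\cdot a/16=a/4$, which lands in $\Z_{>0}$ precisely for $a\in\{4,8,12\}$, i.e.\ for $\zeta\in\{i,-1,-i\}$. So for these three eigenvalues the reduction to the Aomoto complex with weight $\tfrac{a}{16}w$ is not justified, and your proposed local-to-global vanishing argument---which, incidentally, \emph{does} show $H^1(A_\C^\bullet,\tfrac{a}{16}w\wedge-)=0$ for all $a\neq 0$---does not by itself control $H^1(M(\decID),\mathcal L_\zeta)$. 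The obstacle you flag at fourth roots of unity is therefore not resonance in the Aomoto complex but failure of admissibility in the comparison theorem.

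The paper's fix is to abandon the uniform weight at those eigenvalues and choose exponents adapted to the $\scS_0/\scS_1$ partition. For $\lambda=-1$ it takes $a_i=\tfrac12$ on $\scS_0$, $a_i=-\tfrac12$ on $\scS_1$, and $a_{16}=-\tfrac52$; then every quadruple-point sum is $0$ or $-2$, admissibility holds, and the Aomoto vanishing (checked as in Proposition~\ref{mod2aomoto}) transfers to the local system. Analogous choices handle $\lambda=\pm i$. Once you make this adjustment, your argument for (1) goes through and the whole proof is complete.
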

\begin{proof}
(1) Recall that $H_1(F_{\ID}, \Z)$ admits $\Z_{16}$ action by monodromy. 
%Let $\zeta=e^{2\pi\sqrt{-1}/16}$ be the primitive $16$-th root of $1$. 
Then the homology with complex coefficients 
has eigenspace decomposition 
\[
H^1(F_{\ID}, \C)=\bigoplus_{\lambda^{16}=1}H^1(F_{\ID}, \C)_{\lambda}. 
\]
Each eigenspace $H^1(F_{\ID}, \C)_{\lambda}$ is known to be isomorphic 
to $H^1(M(\decID), \scL_{\lambda})$ \cite{cs-mil}, where 
$\scL_{\lambda}$ is the complex rank one local system on 
$M(\decID)$ which has monodromy $\lambda$ along the meridian of 
each line $H\in\decID$ in $\C^2$. 
In particular, the $1$-eigen space is 
$H^1(F_{\ID}, \C)_1\simeq H^1(M(\decID), \C)\simeq \C^{15}$. 
For $\lambda\neq 1$, there are several practical 
ways to check that $H^1(F_{\ID}, \C)_{\lambda}=0$. 
One of the methods is to apply the result by Esnault-Schechtman-Viehweg 
\cite{esv} and Schechtman-Varchenko-Terao \cite{stv}. 
Let $a_1, \dots, a_{16}\in\C$. Assume the following three conditions. 
\begin{itemize}
\item[(P1)] 
$\exp\left(2\pi\sqrt{-1}\cdot a_i\right)=\lambda$, 
\item[(P2)] 
$\sum_{i=1}^{16}a_i=0$. 
\item[(P3)] 
Let $\overline{\ID}$ be the induced projective arrangement of $16$ lines. 
For each quadruple intersection $p\in\CP^2$ of $\overline{\ID}$, 
the sum $\sum_{H_i\ni p}a_i$ is not contained in $\Z_{>0}$. 
\end{itemize}
(In some literature, such a local system is called admissible \cite{na-ra}.) 
Then \cite{esv, stv} asserts that 
\begin{equation}
H^k(M(\decID), \scL_{\lambda})\simeq H^k(A_{\C}^\bullet(\decID), \eta\wedge -), 
\end{equation}
for $k\geq 0$, where $\eta=\sum_{i=1}^{15}a_ie_i\in\A_{\C}^1(\decID)$. 

Here let us illustrate how to prove $H^1(M(\decID), \scL_{\lambda})=0$ for 
$\lambda=-1$. We can choose $a_1, \dots, a_{16}$ as 
\[
\begin{split}
&a_1=a_2=\cdots=a_9=a_{10}=\frac{1}{2}, \\
&a_{11}=a_{12}=\cdots=a_{15}=-\frac{1}{2}\\
&a_{16}=-\frac{5}{2}. 
\end{split}
\]
Then at each quadruple point, the sum of $a_i$'s is either $0$ or $-2$. 
Thus the conditions (P1), (P2) and (P3) are verified. Set 
$\eta:=\sum_{i=1}^{15}a_ie_i\in A_\C^1(\decID)$. We can check 
$H^k(A_{\C}^\bullet(\decID), \eta\wedge -)=0$ by arguments similar to 
the proof of Proposition \ref{mod2aomoto}. 
The vanishing of the eigenspaces for other eigenvalues 
$\lambda$ can be proved in a similar way. 

We can obtain the same result also by using resonant band 
algorithms formulated in \cite{y-mil, y-res}. 

(2) follows from Theorem \ref{thm:2cover}, Proposition \ref{mod2aomoto} 
and Corollary \ref{cor:increase}. 

(3) By universal coefficient theorem and (1), if $H_1(F_{\ID}, \Z)$ does not have 
$2$-torsion, $H_1(F_{\ID}, \Z_2)\simeq H_1(F_{\ID}, \Z)\otimes \Z_2$ 
has rank $15$ over $\Z_2$. This contradicts (2). 
\end{proof}

\begin{remark}
The proof of Theorem \ref{milnortorsion} works more generally. 
Let $\A$ be an arrangement in $\C^3$. 
Suppose 
\begin{itemize}
\item 
$\#\A$ is a power of $2$. 
\item 
The first cohomology of the mod $2$ 
Aomoto complex of the deconing of $\A$ 
%$H^1(A_{\Z_2}^\bullet(d_H\A), w_2\wedge-)$ 
does not vanish. 
\item 
The first cohomology of the Milnor fiber $H^1(F_{\A}, \C)$ does not have 
nontrivial monodromy eigenspaces. 
\end{itemize}
Then we can conclude $H_1(F_{\A}, \Z)$ has $2$-torsion. 
\end{remark}

\begin{remark}
Proposition \ref{mod2aomoto} and Theorem \ref{milnortorsion} (1) 
show that $\ID$ is a counterexample to a conjectures in 
\cite[Conjecture 1.9]{PS-modular}. 
More precisely, the equality $\beta_2=e_2$ in \cite{PS-modular} does not hold for $\ID$. 
\end{remark}

\begin{remark}
Enrique Artal-Bartolo communicated to us that 
he checked by computer that 
$H_1(F_{\ID}, \Z)\simeq\Z^{15}\oplus\Z_2$. It would be a challenging problem 
to develop a method which can check the result theoretically. 
The following are also interesting problems. 
\begin{itemize}
\item[(1)] 
Describe the mod $2$ Betti numbers of the Milnor fibers of arrangements. 
\item[(2)] 
Describe the mod $2$ cohomology rings of double covers 
of arrangement complements. 
\end{itemize}
\end{remark}

\end{document}